\newcommand{\SL}{\operatorname{SL}}
\newcommand{\lcm}{\operatorname{lcm}}
\newcommand{\ZZ}{\mathbb{Z}}
\newcommand{\QQ}{\mathbb{Q}}
\newcommand{\oh}{\mathcal{O}}
\newcommand{\gp}{\mathfrak{p}}
\newtheorem{thm}{Theorem}
\newtheorem{lem}[thm]{Lemma}
\newtheorem*{conj}{Conjecture}
\theoremstyle{definition}
\theoremstyle{remark}
\newtheorem*{conv}{Conventions}
\begin{document}
\title{Distinguishing Hecke eigenforms}
\author[A. Ghitza]{Alexandru Ghitza}
\thanks{\today}
\address{
  Department of Mathematics and Statistics\\
  The University of Melbourne\\
  Parkville, VIC, 3010\\
  Australia
}
\email{aghitza@alum.mit.edu}
\subjclass[2010]{Primary: 11F11.  Secondary: 11F25, 11F30, 11-04}
\keywords{Modular forms, Hecke eigenforms, Fourier coefficients}

\maketitle

\section{Introduction}\label{sect:intro}
Fourier expansions are a popular way of representing
modular forms: they are explicit and easy to manipulate,
and their coefficients often have arithmetic or combinatorial 
meaning and can therefore be interesting in their own right.
For many applications, an essential question is: How many coefficients
are sufficient in order to determine a modular form?  This was answered first in
the context of congruences modulo a prime by Sturm~\cite{Sturm}.  Let
$f$ and $g$ be modular forms of the same weight $k$ and level
$\Gamma\subset\SL_2(\ZZ)$.  Let $\oh$ be the ring of integers of
the number field containing the coefficients of $f$ and $g$, and let
$\gp$ be a prime ideal of $\oh$.  Sturm's result is that if 
$f\not\equiv g\pmod{\gp}$, then
\begin{equation*}
  \text{there exists }\quad n\leq\frac{k}{12}\,[\SL_2(\ZZ):\Gamma]
  \quad\text{ such that }\quad a_n(f)\not\equiv a_n(g)\pmod{\gp}.
\end{equation*}

This result was extended by Ram Murty~\cite{Murty} in several directions:
working with forms of distinct levels and weights, replacing the ``finite
primes'' with the ``infinite prime'', and considering the important
special case of Hecke newforms.  Our interest lies with Theorem 4
in~\cite{Murty}: let $f$, resp. $g$ be newforms of distincts weights
on $\Gamma_0(N_1)$, resp. $\Gamma_0(N_2)$.  Let $N=\lcm(N_1, N_2)$, then
\begin{equation*}
  \text{there exists }\quad n\leq 4\log^2(N)\quad\text{ such that }
  \quad a_n(f)\neq a_n(g).
\end{equation*}
Murty gives a very elegant proof of this result.  Unfortunately, the key
estimate in the proof only holds for $N$ large enough --- it will follow from
the proof of Lemma~\ref{lem:theta} in the next section that the estimate 
fails for $N$ in the set
\begin{equation*}
  \{1, \ldots, 4, 6, \ldots, 12, 30, \ldots, 33, 210, \ldots, 244\}.
\end{equation*}
The statement itself is trivially false for $N=1,2$.

In this paper, we follow Murty's approach to prove the following statement:

\begin{thm}\label{thm:main}
  Let $f$ and $g$ be cuspidal eigenforms of weights $k_1\neq k_2$ 
  on the group $\Gamma_0(N)$.  Then
  \begin{equation}\label{eqn:main}
    \text{there exists }\quad n\leq 4(\log(N)+1)^2
    \quad\text{ such that }\quad a_n(f)\neq a_n(g).
  \end{equation}
\end{thm}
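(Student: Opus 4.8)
The plan is to argue by contradiction using the Hecke recursion at a single small prime coprime to the level, so that the modular-forms input reduces to one elementary identity. Assume that $a_n(f) = a_n(g)$ for every $n \le X$, where $X := 4(\log(N)+1)^2$, and normalize so that $a_1(f) = a_1(g) = 1$. Recall that for any prime $p \nmid N$ a normalized eigenform $h$ of weight $k$ satisfies the Hecke relation $a_{p^2}(h) = a_p(h)^2 - p^{k-1}$, and that this is essentially the only place where the weight enters the low-order coefficients explicitly. Hence if $p\nmid N$ and both $a_p(f) = a_p(g)$ and $a_{p^2}(f) = a_{p^2}(g)$, then subtracting the two relations yields
\begin{equation*}
  0 = a_{p^2}(f) - a_{p^2}(g) = p^{k_2-1} - p^{k_1-1} = p^{k_1-1}\bigl(p^{k_2-k_1}-1\bigr),
\end{equation*}
which is impossible since $p \ge 2$ and $k_1 \ne k_2$. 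This already contradicts the assumption, provided such a prime $p$ exists in the right range.

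So the whole argument reduces to producing a prime $p$ with $p \nmid N$ and $p \le 2(\log(N)+1)$, for then $p \le p^2 \le X$ and both indices $p$ and $p^2$ lie in the range where $f$ and $g$ are assumed to agree. This is exactly the sort of bound on the least prime not dividing $N$ that I expect Lemma~\ref{lem:theta} to furnish. To obtain it I would argue by contradiction: if every prime $p \le 2(\log(N)+1)$ divided $N$, then
\begin{equation*}
  N \ge \prod_{p \le 2(\log(N)+1)} p = \exp\bigl(\theta(2(\log(N)+1))\bigr),
\end{equation*}
where $\theta(x) = \sum_{p \le x}\log p$ is Chebyshev's function. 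Taking logarithms gives $\theta(2(\log(N)+1)) \le \log(N)$, and a Chebyshev-type lower bound of the shape $\theta(x) \gtrsim x$ contradicts this once $x = 2(\log(N)+1)$ is large enough; since $\theta(2(\log(N)+1)) \approx 2\log(N)$, there is ample room. Setting $p$ equal to this least prime then completes the deduction.

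The real obstacle, and the reason for the asymmetric constant $4(\log(N)+1)^2$ in place of Murty's $4\log^2(N)$, lies entirely in this last step: the Chebyshev estimate must be made \emph{effective} and must hold for \emph{all} relevant $N$, not just asymptotically. For small $N$ the approximation $\theta(x)\sim x$ has not yet taken hold, and when $N$ is a multiple of a primorial the least prime not dividing $N$ is pushed up just far enough to violate Murty's inequality — precisely the mechanism behind the exceptional set $\{1,\dots,4,6,\dots,12,30,\dots,33,210,\dots,244\}$. The extra $+1$ inside the square widens the admissible range for $p$ by exactly enough to absorb these cases. I would therefore concentrate the effort on an explicit lower bound for $\theta$ (equivalently, for the primorial), together with a direct check of the finitely many small $N$, rather than on the modular-forms side, which is the one-line Hecke computation above.
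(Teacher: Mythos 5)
Your proposal follows exactly the paper's route: the Hecke identity $a_{p^2}(h) = a_p(h)^2 - p^{k-1}$ at the least prime $p \nmid N$ (the paper's Lemma~\ref{lem:murty}, due to Murty) reduces everything to showing that this prime is at most $2\log(N)+2$, which you then attack via the same primorial/Chebyshev contradiction. The one step you leave as a sketch --- the effective bound $\theta(2x+2) > x$ for all $x \ge 0$ --- is precisely the paper's Lemma~\ref{lem:theta}, proved there via Dusart's explicit estimate $|\theta(x)-x| < 3.965\,x/\log^2(x)$ together with a finite check of exactly the exceptional intervals you correctly anticipate.
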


We also indicate some better asymptotic bounds in~\eqref{eqn:main} 
that follow from this method of proof, and we conclude with a section 
describing a computational experiment that tests how sharp the bounds 
are in the case of forms of level $1$.

\begin{conv}
  We assume that any cuspidal eigenform $f$ has been normalised so that
  the coefficient $a_1(f)=1$.  We use the standard notation $p_k$
  for the $k$-th smallest prime number.
\end{conv}

\section{The proof of Theorem~\ref{thm:main}}\label{sect:proof}
The starting point is the following result extracted from the proof
of Theorem 4 in~\cite{Murty}:

\begin{lem}[Ram Murty]\label{lem:murty}
  Let $f$ and $g$ be cuspidal eigenforms of weights $k_1\neq k_2$ on the 
  group $\Gamma_0(N)$, and let $p$ be the smallest prime not dividing $N$.  
  Then
  \begin{equation*}
    \text{there exists }\quad n\leq p^2\quad\text{ such that }
    \quad a_n(f)\neq a_n(g).
  \end{equation*}
\end{lem}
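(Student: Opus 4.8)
The plan is to argue by contradiction, exploiting the quadratic Hecke recursion at the single prime $p$. Suppose that $a_n(f)=a_n(g)$ for every $n\le p^2$. Since both $p$ and $p^2$ lie in this range, the assumption forces in particular the two equalities
\begin{equation*}
  a_p(f)=a_p(g) \qquad\text{and}\qquad a_{p^2}(f)=a_{p^2}(g).
\end{equation*}
The guiding idea is that these two coefficients already carry enough information to pin down the weight, so a single well-chosen prime suffices.

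First I would recall how the weight enters the Hecke relations. Because $p$ is the smallest prime \emph{not} dividing $N$, we have $p\nmid N$, so $T_p$ and $T_{p^2}$ are genuine Hecke operators for which $f$ and $g$ are eigenforms. On $\Gamma_0(N)$ the nebentypus is trivial, so the diamond operator $\langle p\rangle$ acts as the identity, and the standard relation $T_{p^2}=T_p^2-p^{k-1}\langle p\rangle$ simplifies to $T_{p^2}=T_p^2-p^{k-1}$. Using the normalisation $a_1=1$, the $n$-th Fourier coefficient of a normalised eigenform of weight $k$ equals its $T_n$-eigenvalue, so evaluating this operator identity on $f$ and on $g$ yields
\begin{equation*}
  a_{p^2}(f)=a_p(f)^2-p^{k_1-1}, \qquad a_{p^2}(g)=a_p(g)^2-p^{k_2-1}.
\end{equation*}

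Now I would simply subtract. Feeding in $a_p(f)=a_p(g)$ and $a_{p^2}(f)=a_{p^2}(g)$ makes the quadratic terms cancel, leaving $p^{k_1-1}=p^{k_2-1}$. Since $p\ge 2$, this forces $k_1=k_2$, contradicting the hypothesis $k_1\neq k_2$. Hence the assumption was false, and some $n\le p^2$ must satisfy $a_n(f)\neq a_n(g)$.

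The argument is short, so the main thing to get right is not an estimate but the bookkeeping around the Hecke relations. The recursion $a_{p^2}=a_p^2-p^{k-1}$ is precisely where the hypothesis $p\nmid N$ (together with the triviality of the character on $\Gamma_0(N)$) is used, and the normalisation $a_1=1$ is what identifies Fourier coefficients with eigenvalues. The one point I would double-check is that \emph{eigenform} is meant in the strong sense of a simultaneous eigenform for the Hecke operators away from $N$: this is what guarantees that $a_{p^2}$ is governed by the recursion rather than being an independent quantity, and it is the only structural input the proof really needs.
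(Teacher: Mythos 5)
Your proof is correct and follows essentially the same route as the paper's: assume $a_n(f)=a_n(g)$ for all $n\le p^2$, apply the relation $T_{p^2}=T_p^2-p^{k-1}\langle p\rangle$ at the prime $p\nmid N$ (where the character is trivial), and deduce $p^{k_1-1}=p^{k_2-1}$, contradicting $k_1\neq k_2$. Your added remarks on where $p\nmid N$ and the normalisation $a_1=1$ enter are accurate glosses on the same argument.
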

\begin{proof}
  We proceed by contradiction: suppose $a_n(f)=a_n(g)$ for all $n\leq p^2$.
  We have the 
  well-known recurrence relation for Hecke operators on $\Gamma_0(N)$
  and in weight $k$:
  \begin{equation*}
    T_{p^2} = T_p^2 - p^{k-1}\langle p\rangle,
  \end{equation*}
  where $\langle p\rangle$ is the trivial character on $(\ZZ/N\ZZ)^\times$,
  extended by zero to all of $\ZZ/N\ZZ$.  Since eigenforms are normalised
  so $a_1=1$, the relation between Fourier coefficients and Hecke
  eigenvalues gives
  \begin{eqnarray*}
    a_{p^2}(f) &=& a_p^2(f) - p^{k_1-1}\\
    a_{p^2}(g) &=& a_p^2(g) - p^{k_2-1}.
  \end{eqnarray*}
  By assumption we have $a_{p^2}(f)=a_{p^2}(g)$ and $a_p(f)=a_p(g)$, from
  which we derive $k_1=k_2$, contradicting the hypothesis of the Lemma.
\end{proof}

\begin{proof}[Proof of Theorem~\ref{thm:main}]
  According to Lemma~\ref{lem:murty}, it is enough to show that for any 
  $N\geq 1$ there exists a prime $p \leq 2\log(N)+2$ that does not divide $N$.
  Once again we proceed by contradiction: suppose $N$ is divisible by
  all primes up to $2\log(N)+2$.  Then
  \begin{equation*}
    N\geq\prod_{p\leq 2\log(N)+2} p.
  \end{equation*} 
  Using Chebyshev's function
  \begin{equation*}
    \theta(x)=\sum_{p\leq x}\log(p),
  \end{equation*}
  we can rewrite the previous inequality as
  \begin{equation*}
    \log(N)\geq \sum_{p\leq 2\log(N)+2} \log(p)=\theta(2\log(N)+2).
  \end{equation*}
  It will follow from Lemma~\ref{lem:theta} that the right hand side
  of this inequality is $>\log(N)$ for all $N\geq 1$, which leads to
  a contradiction.
\end{proof}

\begin{lem}\label{lem:theta}
  Chebyshev's function satisfies
  \begin{equation*}
    \theta(2x+2)>x\quad\text{for all }x\geq 0.
  \end{equation*}
\end{lem}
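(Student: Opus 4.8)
The plan is to recast the inequality in a more transparent form and then split the half-line into a finite initial range, handled by direct computation, and an infinite tail, handled by a standard explicit estimate for $\theta$. Substituting $y=2x+2$, the claim $\theta(2x+2)>x$ for $x\geq 0$ is equivalent to
\[
  \theta(y)>\tfrac{y}{2}-1\qquad\text{for all }y\geq 2.
\]
The key structural observation is that $\theta$ is a nondecreasing step function, constant on each interval $[p_n,p_{n+1})$ between consecutive primes, whereas the target $\tfrac{y}{2}-1$ is strictly increasing. Hence on such an interval the worst case occurs as $y\to p_{n+1}^-$, and the inequality on all of $[p_n,p_{n+1})$ reduces to the single discrete condition $\theta(p_n)\geq\tfrac{p_{n+1}}{2}-1$. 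This reduces the whole statement to a sequence of inequalities tied to prime gaps, which I would verify directly for small primes and control asymptotically for large ones.

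For the tail I would invoke a classical explicit lower bound of Rosser--Schoenfeld type, of the shape $\theta(y)>y\bigl(1-\tfrac{1}{\log y}\bigr)$ valid for $y\geq 41$. Since $\log y>2$ throughout this range, the factor $1-\tfrac{1}{\log y}$ exceeds $\tfrac12$, so that $\theta(y)>\tfrac{y}{2}>\tfrac{y}{2}-1$ with considerable room to spare. Translating back through $y=2x+2$, this settles all $x\geq\tfrac{39}{2}$; in fact it yields the stronger bound $\theta(2x+2)>x+1$ there, confirming that the inequality is far from sharp in the tail.

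It remains to treat the finite range $2\leq y\leq 41$, i.e.\ $0\leq x\leq\tfrac{39}{2}$. Here I would tabulate $\theta(p_n)=\sum_{p\leq p_n}\log p$ at the primes $p_n\in\{2,3,5,\dots,37\}$ and check the discrete conditions $\theta(p_n)\geq\tfrac{p_{n+1}}{2}-1$ one by one, together with the base point $\theta(2)=\log 2>0$ covering $x=0$. Each holds comfortably. As a byproduct, this same table records precisely where the unshifted estimate $\theta(2x)>x$ fails, which is the source of the exceptional set of levels $N$ noted in the introduction.

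The only genuine input is the explicit estimate for $\theta$ in the tail; everything else is elementary bookkeeping. Accordingly, the main thing to get right is the interface between the two ranges: one must choose the threshold (here $y=41$, the point from which the Rosser--Schoenfeld bound is stated) so that the analytic estimate already applies exactly where the finite check leaves off, leaving no gap. If a self-contained argument were preferred, the tail could instead be handled by an elementary Chebyshev-type lower bound for $\theta$ derived from estimates on $\binom{2n}{n}$, at the cost of a weaker constant and a correspondingly longer finite verification.
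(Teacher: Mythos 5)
Your proof is correct, and while it follows the same broad template as the paper --- an explicit lower bound for $\theta$ to handle large arguments plus a finite check for the rest --- the details differ in ways worth noting. The paper uses Dusart's bound $|\theta(x)-x|<3.965\,x/\log^2(x)$ to establish the \emph{unshifted} inequality $\theta(2x)>x$ for $x>8.356$, lists the exceptional set $[0,3/2)\cup[\log 6,5/2)\cup[\log 30,7/2)\cup[\log 210,11/2)$ where it fails, and then argues from a figure that translating by $1$ along the $x$-axis ``disentangles the two graphs.'' You instead substitute $y=2x+2$ and prove the shifted inequality directly, reducing it on each interval $[p_n,p_{n+1})$ to the single discrete condition $\theta(p_n)\geq p_{n+1}/2-1$ (strictness survives because the supremum of $y/2-1$ is not attained on the half-open interval); this replaces the paper's graphical step with explicit bookkeeping and is, if anything, the more rigorous route. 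Your tail input is the older Rosser--Schoenfeld bound $\theta(y)>y\left(1-\tfrac{1}{\log y}\right)$ for $y\geq 41$ rather than Dusart's sharper estimate; it is weaker but entirely sufficient, since $\log 41>2$ gives $\theta(y)>y/2$ immediately, and it avoids reliance on the large-scale computations of zeta zeros underlying Dusart's theorem, at the price of a slightly longer finite verification (twelve discrete conditions, for $p_n\leq 37$, versus the paper's four exceptional intervals). The only items to pin down in a final write-up are the precise reference and threshold for the Rosser--Schoenfeld inequality and the explicit table of those twelve conditions.
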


This estimate is an application of Theorem 1.4 in~\cite{Dusart}:

\begin{thm}[Dusart]
  Chebyshev's function satisfies
  \begin{equation*}
    |\theta(x)-x|<3.965\frac{x}{\log^2(x)}\quad\text{for all }x>1.
  \end{equation*}
\end{thm}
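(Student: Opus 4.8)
The plan is to prove the bound not for $\theta$ directly but first for the closely related second Chebyshev function $\psi(x)=\sum_{p^m\le x}\log p$, which is the natural object for the analytic machinery, and then to transfer the estimate back to $\theta$. The engine is the von Mangoldt explicit formula
\[
  \psi(x)=x-\sum_\rho\frac{x^\rho}{\rho}-\log(2\pi)-\tfrac12\log\left(1-x^{-2}\right),
\]
in which $\rho$ runs over the nontrivial zeros of the Riemann zeta function $\zeta$. Here $x$ is the main term and the whole error $|\psi(x)-x|$ is carried by the sum over zeros. Because that sum is only conditionally convergent, I would work with the integrated function $\psi_1(x)=\int_0^x\psi(t)\,dt$, whose explicit formula has the absolutely summable terms $x^{\rho+1}/\big(\rho(\rho+1)\big)$, and recover $\psi(x)$ itself by trapping it between the difference quotients $\big(\psi_1(x\pm h)-\psi_1(x)\big)/(\pm h)$ and optimizing in $h$, following Rosser--Schoenfeld.

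The two arithmetic inputs that drive the estimate are an explicit zero-free region, $\zeta(\sigma+it)\ne 0$ whenever $\sigma\ge 1-1/\big(R_0\log|t|\big)$ for $|t|\ge t_0$ with a known constant $R_0$, and a numerical verification of the Riemann hypothesis up to a large height $H$, meaning every zero with $0<\operatorname{Im}(\rho)\le H$ lies on the critical line. With these in hand I would split the sum over zeros at height $H$. The zeros below $H$ have real part exactly $1/2$, so each contributes a term of size $x^{1/2}/|\rho|$; their total is of order $\sqrt{x}$ (up to logarithmic factors from the density of zeros), negligible beside $x/\log^2 x$. The zeros above $H$ are controlled by the zero-free region: each satisfies $\operatorname{Re}(\rho)\le 1-1/\big(R_0\log|\operatorname{Im}\rho|\big)$, so $|x^\rho|\le x\cdot x^{-1/(R_0\log|\operatorname{Im}\rho|)}$, and integrating this against the Riemann--von Mangoldt density $dN(t)\sim\tfrac{1}{2\pi}\log t\,dt$ --- after accounting for the truncation of the explicit formula and optimizing the cutoff height as a function of $x$ --- yields a tail of order $x\exp\!\big(-c\sqrt{\log x}\big)$.

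The quantitative heart of the argument is then to combine these contributions, track every constant, and confirm that the total error stays below $3.965\,x/\log^2 x$ for \emph{all} $x>1$. Since $x\exp(-c\sqrt{\log x})$ decays faster than any fixed power of $\log x$, the analytic bound lies comfortably below $3.965\,x/\log^2 x$ once $x$ exceeds an explicit threshold; on the remaining finite range one checks the inequality directly against tabulated values of $\theta$ (the primes having been enumerated far past the point where the supremum of $|\theta(x)-x|\log^2 x/x$ is attained), and the small interval $1<x<2$ is immediate since there $\theta(x)=0$ and the claim reduces to $\log^2 x<3.965$, which holds throughout. Finally, passing from $\psi$ back to $\theta$ uses $\psi(x)-\theta(x)=\sum_{k\ge 2}\theta(x^{1/k})=O\!\big(\sqrt{x}\log x\big)$, a difference of strictly smaller order absorbed into the constant. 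The main obstacle is unmistakably this middle step: extracting the \emph{specific} constant $3.965$, rather than merely the correct order $x/\log^2 x$, forces one to use the sharpest available $R_0$ together with a genuinely large verified height $H$ and to interlock the two ranges by a careful optimization --- precisely the explicit computation carried out by Dusart, which is why we invoke the result here rather than reproduce it.
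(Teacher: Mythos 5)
The paper gives no proof of this statement at all: it is quoted verbatim from Dusart (Theorem 1.4 of the cited paper), with only a remark that the result rests on the numerical verification of the first $1.5\times 10^9$ zeros of the Riemann zeta function, and your proposal --- after an accurate sketch of the standard explicit-formula machinery (truncated von Mangoldt formula via $\psi_1$, explicit zero-free region plus verified zeros up to a height $H$, the $\psi$-to-$\theta$ transfer, and a finite-range check that correctly handles $1<x<2$) --- arrives at the same conclusion, namely that the specific constant $3.965$ must be taken from Dusart's computation rather than rederived. So your treatment is essentially the same as the paper's: both invoke Dusart, and your outline correctly identifies the ingredients (in particular the verified zeta zeros) on which that computation depends.
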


It is worth noting that Dusart's results are based on detailed knowledge
of the positions of the first $1.5\times 10^9$ zeros of the Riemann zeta 
function, obtained numerically by Brent, van de Lune, te Riele, and 
Winter~\cite{Brent}.

\begin{proof}[Proof of Lemma~\ref{lem:theta}]
  We start by showing that
  \begin{equation*}
    \theta(2x)>x\quad\text{for all }x>8.356.
  \end{equation*}
  Indeed, suppose $x>8.356$, then we have
  \begin{equation*}
    x>\frac{1}{2}\exp(\sqrt{2\cdot 3.965}) \quad\Leftrightarrow\quad
    \log^2(2x)>2\cdot 3.965.
  \end{equation*}
  By Dusart's estimate, we get
  \begin{equation*}
    \theta(2x)-2x>-3.965\frac{2x}{\log^2(2x)}>-x.
  \end{equation*}

  Since $\theta$ is a step function, it is easy to check
  which values of $x\in [0, 8.356]$ do not satisfy the inequality
  $\theta(2x)>x$, namely
  \begin{equation*}
    x\in [0,3/2)\cup [\log 6, 5/2)\cup [\log 30, 7/2)\cup [\log 210, 11/2).
  \end{equation*}
  The discrepancy is largest at the right ends of the intervals,
  as can be seen in the Figure.  We also notice that 
  translating $\theta(2x)$ by $-1$ along the $x$-axis will
  disentangle the two graphs, in other words
  \begin{equation*}
    \theta(2x+2)>x\quad\text{for all }x\geq 0,
  \end{equation*}
  as claimed.
  \begin{figure*}[h]
    \includegraphics[width=10cm]{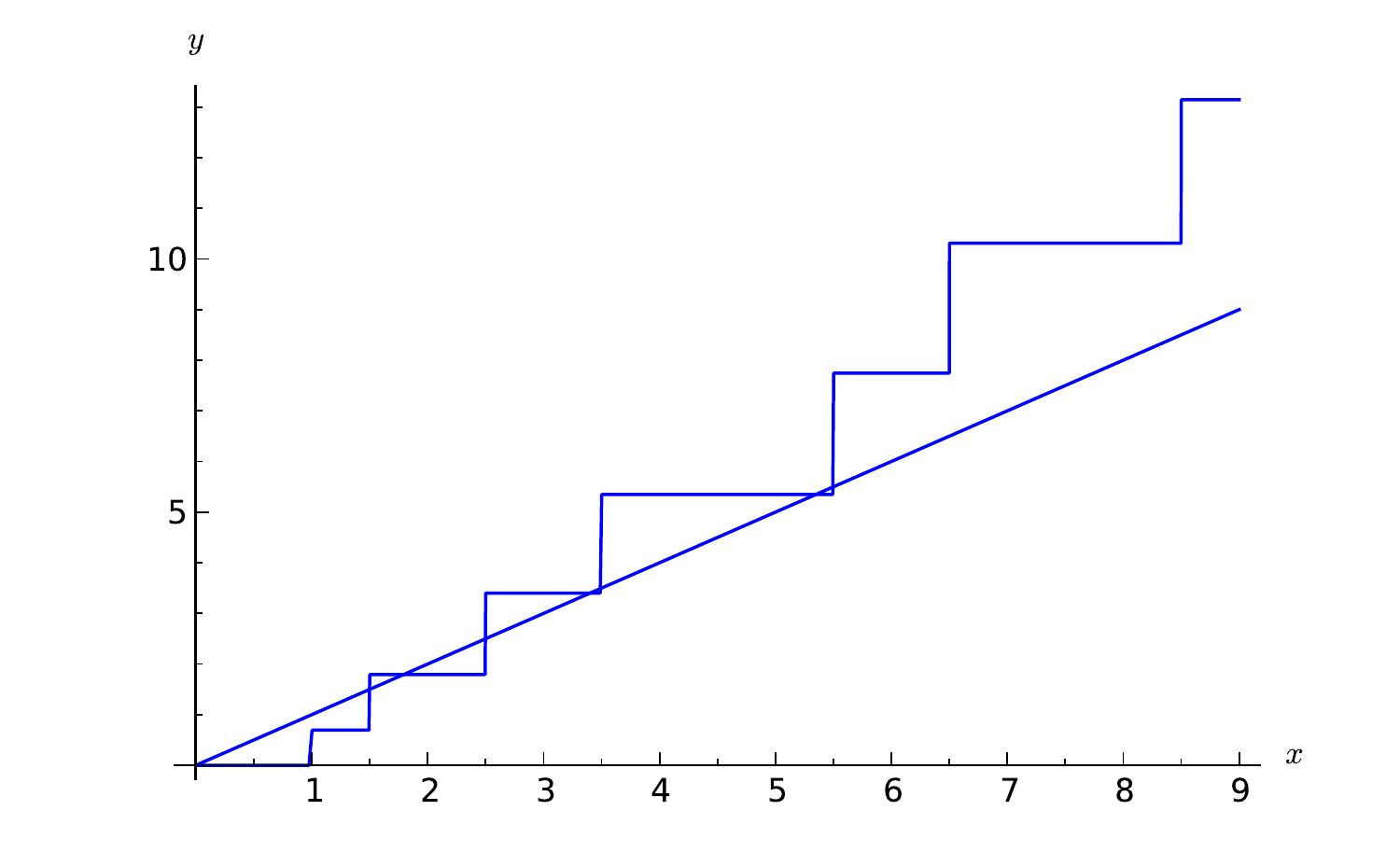}
    \caption{Graphs of $y=\theta(2x)$ and $y=x$}
  \end{figure*}
\end{proof}

\section{Asymptotic upper bounds}

The upper bound appearing in the statement of Theorem~\ref{thm:main} was
chosen because it is a very simple function and because it holds for all
values of the level $N$.  However, the reader will have realised from
the estimates we used that this bound gets less and less sharp as $N$
increases (because $\theta(x)\sim x$ by the Prime Number Theorem).  
We use some known results on the behaviour of the prime
gaps $g_k=p_{k+1}-p_k$ to give better unconditional and conditional 
asymptotic upper bounds.

\begin{thm}
  Let $f$ and $g$ be cuspidal eigenforms of weights $k_1\neq k_2$ on the 
  group $\Gamma_0(N)$.  Then
  \begin{enumerate}
    \item there exists 
      \begin{equation*}
        n=O\left(\left( \log(N)+\log(N)^{0.525}\right)^2\right)
      \end{equation*}
      such that $a_n(f)\neq a_n(g)$;
    \item assuming the Riemann hypothesis, there exists 
      \begin{equation*}
        n=O\left(\left( \log(N)+\log(N)^{0.5}\log\log(N)\right)^2\right)
      \end{equation*}
      such that $a_n(f)\neq a_n(g)$;
    \item assuming Cram\'er's conjecture on prime gaps 
      (see~\cite{Granville}), there exists 
      \begin{equation*}
        n=O\left(\left( \log(N)+(\log\log(N))^2\right)^2\right)
      \end{equation*}
      such that $a_n(f)\neq a_n(g)$.
  \end{enumerate}
\end{thm}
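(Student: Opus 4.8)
The plan is to invoke Lemma~\ref{lem:murty} to reduce the entire statement to an upper bound on $p$, the smallest prime not dividing $N$: since there is some $n\leq p^2$ with $a_n(f)\neq a_n(g)$, it suffices to bound $p^2$ from above by the three quantities appearing in the statement. The improvement over Theorem~\ref{thm:main} will come from two sources. First, the crude estimate $\theta(2x+2)>x$ of Lemma~\ref{lem:theta} is replaced by the Prime Number Theorem, which pins the leading term of $p$ at $\log(N)$ rather than $2\log(N)+2$. Second, the distance from $p$ to the previous prime is controlled by prime gap results, which is exactly where the three cases diverge.

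First I would set up the key inequality. Write $p=p_{k+1}$, so that every prime $p_1,\dots,p_k$ divides $N$; consequently
\begin{equation*}
  N\geq\prod_{i=1}^{k}p_i=\exp\bigl(\theta(p_k)\bigr),
  \qquad\text{i.e.}\qquad \log(N)\geq\theta(p_k).
\end{equation*}
By the Prime Number Theorem in the form $\theta(x)\sim x$, inverting this inequality gives
\begin{equation*}
  p_k\leq\bigl(1+o(1)\bigr)\log(N),
\end{equation*}
and in particular $\log(p_k)=O(\log\log(N))$ and $\sqrt{p_k}=O\bigl(\sqrt{\log(N)}\bigr)$. It then remains to bound $p=p_{k+1}=p_k+g_k$, where $g_k$ is the prime gap.

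The three cases are obtained by feeding in the appropriate estimate for $g_k$ and squaring. Unconditionally, the theorem of Baker, Harman, and Pintz gives $g_k=O(p_k^{0.525})=O\bigl(\log(N)^{0.525}\bigr)$, so that $n\leq p^2=O\bigl((\log(N)+\log(N)^{0.525})^2\bigr)$, which is~(1). Under the Riemann hypothesis, Cram\'er's classical bound $g_k=O\bigl(\sqrt{p_k}\,\log p_k\bigr)=O\bigl(\log(N)^{0.5}\log\log(N)\bigr)$ yields~(2). Finally, Cram\'er's conjecture (see~\cite{Granville}) predicts $g_k=O\bigl((\log p_k)^2\bigr)=O\bigl((\log\log(N))^2\bigr)$, giving~(3).

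The analytic content all sits in the cited prime gap theorems, so the only real work in our argument is the inversion step: one must check that $\theta(p_k)\leq\log(N)$ forces $p_k\leq(1+o(1))\log(N)$ with the error term genuinely absorbed into the $O$-notation of each case. I expect this to be the main point to get right, since the gap estimates are stated in terms of $p_k$ and must be re-expressed in terms of $\log(N)$. Here one should either invoke an effective form of the Prime Number Theorem (for instance Dusart's bounds, already used for Lemma~\ref{lem:theta}) or simply observe that $p_k\leq(1+o(1))\log(N)$ is enough, because in each case the gap term is of strictly smaller order than $\log(N)$, so any multiplicative constant produced by the inversion is harmless.
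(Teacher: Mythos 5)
Your proposal is correct and follows essentially the same route as the paper: reduce to bounding the smallest prime $p=p_{k+1}$ not dividing $N$ via Lemma~\ref{lem:murty}, use $\log(N)\geq\theta(p_k)$ together with $\theta(x)\sim x$ to get $p_k\leq(1+o(1))\log(N)$, and then apply the Baker--Harman--Pintz, Cram\'er (under RH), and Cram\'er-conjecture gap bounds respectively. Your write-up is in fact slightly more explicit than the paper's, which phrases the reduction in terms of the worst-case primorials $N_k=\exp(\theta(p_k))$ and leaves the inversion step as ``simple manipulations.''
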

\begin{proof}
  The key point is to estimate the size of the smallest prime $p(N)$ not
  dividing $N$ in terms of $N$.  Since we are looking for an upper bound,
  we are naturally led to focus on the worst-case scenario,
  the \emph{primorials}
  \begin{equation*}
    N_k=p_1p_2\ldots p_k=\exp(\theta(p_k)),
  \end{equation*}
  for which we clearly have $p(N_k)=p_{k+1}$.  Writing $g_k=p_{k+1}-p_k$
  for the gap between consecutive primes, we have
  \begin{equation*}
    p_{k+1}=p_k+g_k=\begin{cases}
      p_k+O(p_k^{0.525}) & \text{unconditional, see Baker-Harman-Pintz~\cite{Baker-Harman-Pintz}}\\
      p_k+O(\sqrt{p_k}\log(p_k)) & \text{assuming RH, see Cram\'er~\cite{Cramer}}\\
      p_k+O(\log^2(p_k)) & \text{assuming Cram\'er's conjecture, see~\cite{Granville}}.
    \end{cases}
  \end{equation*}
  Simple manipulations together with the fact that 
  $p_k\sim\theta(p_k)=\log(N_k)$ give us the upper bounds in the statement.
\end{proof}

\section{A numerical experiment}
Since the results in the previous section all build upon Murty's approach
in Lemma~\ref{lem:murty}, it is natural to ask how tight the bound of
Lemma~\ref{lem:murty} is.

Let us consider the level $1$ case.  Here, Lemma~\ref{lem:murty} tells us
that there exists $n\leq 4$ such that $a_n(f)\neq a_n(g)$.  Is it possible
to improve on the $4$?  We investigated this question via a computational
approach, which we will describe after we recall the following

\begin{conj}[Maeda, Conj. 1.2 in~\cite{Hida-Maeda}]\label{conj:maeda}
  The characteristic polynomial of the Hecke operator $T_2$ acting on
  the space of cusp forms $S_k(\SL_2(\ZZ))$ is irreducible.
\end{conj}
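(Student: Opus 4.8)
The plan is to be candid at the outset: Maeda's conjecture is open, so what I can offer is a reformulation together with the natural lines of attack, not a complete argument. First I would translate the statement into Galois-theoretic language. Since $T_2$ preserves the integral structure on $S_k(\SL_2(\ZZ))$, its characteristic polynomial $\chi_k(X)$ lies in $\ZZ[X]$, and its roots are exactly the eigenvalues $a_2(f)$ as $f$ ranges over the normalised cuspidal eigenforms of weight $k$ and level $1$. Irreducibility of $\chi_k$ over $\QQ$ is then equivalent to the assertion that these $d=\dim S_k(\SL_2(\ZZ))$ eigenvalues form a single orbit under $\operatorname{Gal}(\overline{\QQ}/\QQ)$ --- in particular that there is a single Hecke eigenvalue field, of degree $d$, and that $T_2$ is a primitive generator for the Hecke algebra over $\QQ$. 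The stronger form of the conjecture predicts more, namely that the Galois group $G_k=\operatorname{Gal}(\chi_k/\QQ)$ is the full symmetric group $S_d$.

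Next I would pursue two concrete routes, both resting on the Dedekind--Frobenius dictionary: for a prime $\ell$ not dividing $\operatorname{disc}\chi_k$, the degrees of the irreducible factors of $\chi_k\bmod\ell$ give the cycle type of a Frobenius element in $G_k\subseteq S_d$. For irreducibility alone it would suffice to exhibit a single prime $\ell$ at which $\chi_k\bmod\ell$ is irreducible of degree $d$, i.e.\ at which Frobenius is a $d$-cycle; such a prime witnesses that $G_k$ is transitive. For the full symmetric group one would instead verify three things: transitivity, primitivity, and the presence of a transposition --- the last arising from any prime whose factorisation type is $(2,1,\dots,1)$ --- after which Jordan's theorem (a primitive permutation group containing a transposition is all of $S_d$) finishes the job. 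For any fixed weight $k$ all of this is a finite, effective computation, and this is precisely what the experiment described below carries out.

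The main obstacle is that none of these steps is known to be executable uniformly in $k$. Producing, for every weight, a prime with the required factorisation type, or otherwise establishing transitivity and primitivity for all $k$ at once, is exactly the open content of the conjecture; the discriminants and factorisation patterns of the $\chi_k$ show no pattern that one can presently leverage into a proof. Absent a genuinely new source of large Galois image --- for instance from the $\ell$-adic representations attached to the eigenforms, or from congruence and level-raising phenomena relating different weights --- I would not expect the elementary factorisation bookkeeping to close. Accordingly Maeda's conjecture enters what follows as a hypothesis rather than a theorem, and the realistic goal is to confirm irreducibility, and indeed $G_k=S_d$, for all $k$ below an explicit bound by direct calculation.
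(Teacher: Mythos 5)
You have correctly recognised that this statement is an open conjecture: the paper offers no proof of it either, citing only numerical verifications (Farmer--James, Buzzard, Stein, Kleinerman, and the author's own check of $k=2796$) and thereafter using it purely as a hypothesis in Theorem~\ref{thm:level1}. Your Galois-theoretic reformulation and the Dedekind--Frobenius/Jordan strategy accurately describe how those finite-weight verifications are actually carried out, so your treatment is consistent with the paper's and appropriately candid about what remains open.
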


Maeda's conjecture has been verified numerically by 
Farmer-James~\cite{Farmer-James}, Buzzard~\cite{Buzzard}, 
Stein, and 
Kleinerman\footnote{See~\url{http://wstein.org/Tables/charpoly_level1/t2/}}
for all weights $\leq 3000$, except for 
$2796$.  We have verified the case $k=2796$ using the mathematical software
Sage~\cite{Sage} (the computation of the characteristic polynomial used
native Sage code, and the check for irreducibility used polynomial
factorisation code from PARI/GP~\cite{Pari}).

Based on a sample of results for small weights, our project was to compute
the Fourier coefficient $a_2$ of all cuspidal eigenforms of level $1$ and
weights $\leq 10000$.  To each $a_2$ we associate its characteristic
polynomial over $\QQ$.  If we assume Maeda's conjecture, then if we want
to detect duplicates in the list of $a_2$'s, it suffices to look for
duplicates in the list of characteristic polynomials.  To make our
search even more efficient, instead of computing (and storing) the
characteristic polynomial corresponding to each weight $k$, we simply
compute and store the degree and the trace of $T_2$ on the space
$S_k(\SL_2(\ZZ))$.  This reduces the computations that we need to perform
to the following:
\begin{enumerate}
  \item Find the Victor Miller basis of $S_k(\SL_2(\ZZ))$
    \begin{eqnarray*}
      q\phantom{+q^2+\ddots+q^d} &+& \ldots=:f_1(q)\\
      q^2\phantom{+\ddots+q^d} &+& \ldots=:f_2(q)\\
      \ddots\phantom{+q^d} &\vdots& \\
      q^d &+& \ldots=:f_d(q),
    \end{eqnarray*}
    where $d=d_k=\dim S_k(\SL_2(\ZZ))$ and each $q$-expansion is
    computed up to and including the coefficient of $q^{2d}$ (the precision
    required for computing the action of $T_2$ in the next step).
  \item For $j=1,\ldots,d$, compute the coefficient of $q^j$ in $T_2 f_j$,
    given by $a_{2j}(f_j)+2^{k-1}a_{j/2}(f_j)$.
  \item The trace $t_k$ of $T_2$ is the sum of the coefficients computed
    in the previous step.  Store the pair $(d_k, t_k)$. 
\end{enumerate}
This algorithm was implemented in Sage~\cite{Sage} and run in parallel
(one instance per value of $k$) on Linux servers 
\{cerelia, skadi, soleil\}.ms.unimelb.edu.au at the University of Melbourne,
and \{geom, mod, sage\}.math.washington.edu at the University of 
Washington\footnote{We thank William Stein for giving us access to these
servers, which were obtained with support from the US National Science 
Foundation under Grant No. DMS-0821725.}.

After running\footnote{The instance $k=10000$ required about 50 minutes on one
core of a Quad-Core AMD Opteron 8356 processor, and 5.4Gb of memory.} 
this algorithm over the range of weights $2\leq k\leq 10000$, we
found that the list of pairs $(d_k, t_k)$ contained no duplicates.  We
record this result as

\begin{thm}\label{thm:level1}
  Comparing the Fourier coefficient $a_2$ is sufficient to distinguish
  all cuspidal eigenforms of level $1$ and weights $\leq 3000$.  If we
  assume Maeda's conjecture, the same is true for weights $\leq 10000$.
\end{thm}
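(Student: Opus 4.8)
The plan is to reduce the statement of Theorem~\ref{thm:level1} to the computational verification already described, by spelling out the logical chain connecting the numerical data to the claimed distinguishing property. The statement has two parts: an unconditional claim for weights $\leq 3000$, and a claim conditional on Maeda's conjecture for weights $\leq 10000$. Both rest on the same underlying fact, namely that the list of pairs $(d_k, t_k)$ computed by the algorithm contains no duplicates over the relevant range.

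First I would establish the key implication: if two eigenforms $f$ and $g$ of level $1$ satisfy $a_2(f) = a_2(g)$, then their weights must produce identical invariants $(d_k, t_k)$. The reasoning runs in two directions depending on what we assume. If $f$ and $g$ have the same weight $k$, then under Maeda's conjecture the characteristic polynomial of $T_2$ on $S_k(\SL_2(\ZZ))$ is irreducible, so all its eigenvalues are Galois conjugate; two eigenforms sharing an eigenvalue $a_2$ would then be conjugate, hence genuinely the same eigenform. Thus within a fixed weight, distinct eigenforms have distinct $a_2$. If instead $f$ and $g$ have distinct weights $k_1 \neq k_2$, then equality $a_2(f) = a_2(g)$ forces the two characteristic polynomials to share a common root over $\QQ$; since each characteristic polynomial is (under Maeda) irreducible over $\QQ$, sharing a root means the two polynomials are equal, and in particular they have the same degree $d$ and the same trace $t$. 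This is precisely what the pair $(d_k, t_k)$ records.

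Next I would invoke the computational output. Having shown that any coincidence $a_2(f) = a_2(g)$ across the tested weights would manifest as a duplicate pair $(d_k, t_k)$, the observed \emph{absence} of duplicates in the computed list immediately yields the conclusion: no two eigenforms in the range share the value of $a_2$, so $a_2$ alone distinguishes them. For the unconditional claim up to weight $3000$, I would note that Maeda's conjecture is not merely assumed but has been verified numerically for all weights $\leq 3000$ (indeed up to $10000$ in the data cited), so the irreducibility input is a theorem in that range; hence the conclusion for weights $\leq 3000$ is unconditional. For weights up to $10000$, the same argument goes through but the irreducibility of the characteristic polynomial rests on Maeda's conjecture as an unproven hypothesis, yielding the conditional statement.

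The main obstacle is not any single hard estimate but rather the careful handling of the role played by the reduced invariant $(d_k, t_k)$ in place of the full characteristic polynomial. The subtlety is that absence of duplicate pairs $(d_k, t_k)$ is a \emph{stronger} condition than absence of shared characteristic polynomials, so the reduction is sound in the direction we need: distinct pairs certainly imply distinct polynomials, hence distinct $a_2$ values. I would want to state explicitly that the algorithm's correctness---in particular the formula $a_{2j}(f_j) + 2^{k-1} a_{j/2}(f_j)$ for the coefficient of $q^j$ in $T_2 f_j$ and the summation giving $t_k$---guarantees that the stored traces genuinely equal the traces of $T_2$, so that a duplicate in the true invariants would not be missed. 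Beyond this bookkeeping, the argument is essentially a transcription of the experimental result into a theorem, with Maeda's conjecture supplying the bridge from ``same $a_2$'' to ``same eigenform or matching invariants.''
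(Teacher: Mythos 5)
Your proposal is correct and follows essentially the same route as the paper: the paper's justification for Theorem~\ref{thm:level1} is precisely the preceding description of the experiment, namely that (under Maeda's conjecture, which is a verified theorem for weights $\leq 3000$) duplicate $a_2$ values would force duplicate irreducible characteristic polynomials of $T_2$, hence duplicate pairs $(d_k,t_k)$, and the computation found none. You merely make explicit the logical chain (irreducibility ruling out same-weight coincidences, shared roots of irreducibles forcing equal polynomials, and the soundness of replacing polynomials by degree--trace pairs) that the paper leaves implicit.
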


\bibliographystyle{amsplain}
\bibliography{newforms}

\providecommand{\bysame}{\leavevmode\hbox to3em{\hrulefill}\thinspace}
\providecommand{\MR}{\relax\ifhmode\unskip\space\fi MR }
\providecommand{\MRhref}[2]{%
  \href{http://www.ams.org/mathscinet-getitem?mr=#1}{#2}
}
\providecommand{\href}[2]{#2}
\begin{thebibliography}{10}

\bibitem{Baker-Harman-Pintz}
R.~C. Baker, G.~Harman, and J.~Pintz, \emph{The difference between consecutive
  primes. {II}}, Proc. London Math. Soc. (3) \textbf{83} (2001), no.~3,
  532--562. \MR{MR1851081 (2002f:11125)}

\bibitem{Buzzard}
Kevin Buzzard, \emph{On the eigenvalues of the {H}ecke operator {$T_2$}}, J.
  Number Theory \textbf{57} (1996), no.~1, 130--132. \MR{MR1378578 (96m:11033)}

\bibitem{Cramer}
Harald Cram\'er, \emph{On the order of magnitude of the difference between
  consecutive prime numbers}, Acta. Arith. \textbf{2} (1936), 23--46.

\bibitem{Dusart}
Pierre Dusart, \emph{Autour de la fonction qui compte le nombre de nombres
  premiers}, Ph.D. thesis, Universit\'e de Limoges, 1998.

\bibitem{Farmer-James}
D.~W. Farmer and K.~James, \emph{The irreducibility of some level 1 {H}ecke
  polynomials}, Math. Comp. \textbf{71} (2002), no.~239, 1263--1270
  (electronic). \MR{MR1898755 (2003e:11046)}

\bibitem{Granville}
Andrew Granville, \emph{Harald {C}ram\'er and the distribution of prime
  numbers}, Scand. Actuar. J. (1995), no.~1, 12--28, Harald Cram{\'e}r
  Symposium (Stockholm, 1993). \MR{MR1349149 (96g:01002)}

\bibitem{Hida-Maeda}
Haruzo Hida and Yoshitaka Maeda, \emph{Non-abelian base change for totally real
  fields}, Pacific J. Math. (1997), no.~Special Issue, 189--217, Olga
  Taussky-Todd: in memoriam. \MR{MR1610859 (99f:11068)}

\bibitem{Murty}
M.~Ram Murty, \emph{Congruences between modular forms}, Analytic number theory
  ({K}yoto, 1996), London Math. Soc. Lecture Note Ser., vol. 247, Cambridge
  Univ. Press, Cambridge, 1997, pp.~309--320. \MR{MR1694998 (2000c:11073)}

\bibitem{Sage}
W.\thinspace{}A. Stein et~al., \emph{{S}age {M}athematics {S}oftware ({V}ersion
  4.4.alpha1)}, The Sage Development Team, 2010, available from
  \url{http://www.sagemath.org}.

\bibitem{Sturm}
Jacob Sturm, \emph{On the congruence of modular forms}, Number theory ({N}ew
  {Y}ork, 1984--1985), Lecture Notes in Math., vol. 1240, Springer, Berlin,
  1987, pp.~275--280. \MR{MR894516 (88h:11031)}

\bibitem{Pari}
{The PARI~Group}, Bordeaux, \emph{{PARI/GP, version {\tt 2.3.3}}}, 2006,
  available from \url{http://pari.math.u-bordeaux.fr/}.

\bibitem{Brent}
J.~van~de Lune, H.~J.~J. te~Riele, and D.~T. Winter, \emph{On the zeros of the
  {R}iemann zeta function in the critical strip. {IV}}, Math. Comp. \textbf{46}
  (1986), no.~174, 667--681. \MR{MR829637 (87e:11102)}

\end{thebibliography}
\end{document}